\documentclass[12pt]{amsart}
\usepackage{amscd}
\usepackage[all]{xy}
\usepackage[notcite,notref]{showkeys}

\newcommand{\Mot}{{\operatorname{Mot}}}

\newcommand{\Hom}{\operatorname{Hom}}

\newcommand{\CH}{\operatorname{CH}}
\newcommand{\Br}{\operatorname{Br}}

\newcommand{\Corr}{\operatorname{Corr}}

\newcommand{\rank}{\operatorname{rank}}
 
\newcommand{\Pic}{\operatorname{Pic}}
\newcommand{\Spec}{\operatorname{Spec}}

\newcommand{\Bir}{\operatorname{Bir}}
\renewcommand{\Mot}{\operatorname{Mot}}

\newcommand{\C}{\mathbf{C}}
\renewcommand{\L}{\mathbf{L}}
\renewcommand{\P}{\mathbf{P}}

\newcommand{\N}{\mathbf{N}}
\newcommand{\Q}{\mathbf{Q}}
\newcommand{\Z}{\mathbf{Z}}
\newcommand{\un}{\mathbf{1}}

\newcommand{\sA}{\mathcal{A}}
\renewcommand{\v}{\mathbf{v}}
 \newcommand{\sC}{\mathcal{C}}
\newcommand{\sD}{\mathcal{D}}
\newcommand{\sY}{\mathcal{Y}}

\newcommand{\sM}{\mathcal{M}}

\newcommand{\sO}{\mathcal{O}}

\newcommand{\sJ}{\mathcal{J}}

 \newcommand{\sF}{\mathcal{F}}

 \numberwithin{equation}{section}

\theoremstyle{plain}
\newtheorem{thm}[equation]{Theorem}
\newtheorem{prop}[equation]{Proposition}
\newtheorem{lm}[equation]{Lemma}
\newtheorem{cor}[equation]{Corollary}
\newtheorem{conj}[equation]{Conjecture}

\theoremstyle{definition}

\newtheorem{ex}[equation]{Example}
\newtheorem{rk}[equation]{Remark}

\begin{document}

 \title{The  Chow motive  of LSV hyper-K\"alher tenfolds}
 \author[C. Pedrini]{Claudio Pedrini}
\address{Dipartimento di Matematica \\ %
Universit\'a degli Studi di Genova \\ %
Via Dodecaneso 35 \\ %
16146 Genova \\ %
Italy}
\email{claudiopedrini4@gmail.com}

\begin{abstract}  Let $X$ be a smooth cubic fourfold  over $\C$ and let $\pi : \sJ_U \to U$, with $ U \subset (\P^5)^*$, be the Lagrangian fibration whose   fibres   are the smooth hyperplane sections $Y_ H = X \cap H$, with $H \in U$. 
There always exists a (not unique) smooth compactification $\bar \sJ \to  (\P^5)^*$ which is a hyper-K\"alher manifold of OG10 type. Since two different compactifications  are birationally equivalent their Chow motives are isomorphic. 
For  a general $X$ a geometrical construction of a  smooth compactification $\sJ(X)$ with irreducible   fibres  has been described in [LSV]. In this note we prove that the Chow motive $h(\sJ(X)) $ is a direct summand of the (twisted) motive of $X^5$  and  therefore is  is of abelian type if $h(X)$ is of abelian type.We describe a 10 -dimensional family $\sF$ of cubics  $X$ such that the compactification  $\sJ(X)$ is unique, smooth, with irreducible fibres, and  the  Chow motive $h( \sJ(X) )$ is  of abelian type.\end{abstract}

\maketitle 

\section{Introduction}

A hyper-K\"ahler manifold (HK for short) is a simply connected  compact K\"ahler manifold $X$ such that $H^0(X, \Omega^2_X) =\C \omega$, where $\omega$ is a holomorphic 2-form on $X$ which is nowhere degenerate 
(as a skew symmetric form on the tangent space).\par
 In dimension 2 a HK manifold is a K3 surface  For every even complex dimension, there are two known deformations types of irreducible holomorphic symplectic
varieties: the Hilbert scheme $S^{[n]}$ of n-points on a K3 surface $S$ and the generalized Kummer
varieties. A generalized Kummer variety $X$ is of the form $X=K^n(A)=a^{-1}(0)$, where $A$ is an abelian surface and $a:  A^{[n+1]} \to A$ is the Albanese map.  In addition to these two series of examples, there are two exceptional examples. 
They occur in dimension 6 and in dimension 10 and were
discovered by O' Grady by resolving symplectically two singular
moduli space of sheaves on symplectic surfaces. These two examples are usually referred
to as OG6 and OG10. The hyper-K\"alher varieties of OG10 type are not deformation equivalent to $S^{[5]}$.\par
We will denote by $\sM_{rat}(\C)$ the (covariant) category of Chow motives over $\C$ (with $\Q$-coefficients) , by $\sM_{hom}(\C)$ the category of homological motives and by $\sM_A(\C)$ the category of Andre'  motives,.
The category $\sM_A(\C)$  is obtained from $\sM_{hom}(\C)$ by formally adjoining the Lefschetz involutions  $*_L$ associated to the Lefschetz isomorphisms $H^i(X) \simeq H^{2d-i}(X)$, where
$X$ is a smooth projective variety of dimension $d$, see [An]. The involutions $*_L$ are given by an algebraic correspondence if and only if the   Lefschetz standard conjecture $B(X)$ holds true.Therefore under $B(X)$ the category of Andre' motives coincides with $\sM_{hom}(\C)$. There  are functors

$$\CD \sM_{rat}(\C)@>{F}>>\sM_{hom}(\C)@>{G}>>\sM_A(\C)\endCD$$

Assuming Kimura' s Conjecture on the finite dimensionality of motives the functor  $F$ is conservative, i.e. it preserves isomorphism, while, under the standard Conjecture $B(X)$,  the functor $G$ is an equivalence of categories. Since the motives of K3 surfaces and of cubic 4-folds are of abelian type in the Andre' category  $\sM_A(\C)$ (see [An] ) they are conjecturally of abelian type in $\sM_{rat}(\C)$, i.e. they  lie  in the subcategory  $\sM^{Ab}_{rat}(\C)$ generated by abelian varieties.  For a K3 surface  $S$ this result is known only when $\Pic S$ has rank $\ge 19$, for Kummer surfaces and in some other scattered cases. For a cubic fourfold $X$ it is known  for a family of special cubic fourfolds in $\sC_d$, see [ABP]. A family of non-special cubic fourfolds with a motive of abelian type  has been described in in [Lat ]. By the results in  [deC-M] the same conjecture holds for  $h(S^{[n]})$ and hence  for a HK variety that is birational to $S^{[n]}$, because HK varieties that are birationally equivalent have isomorphic motives in $\sM_{rat}(\C)$.\par
 In [FFZ, Cor.1.15] it is proved that the Andre' motive of all the known projective hyper-K\"ahler varieties is of abelian type. All these results suggest the following conjecture:
\begin{conj}  The motive of a HK manifold  is of Abelian type in $\sM_{rat}(\C)$.\end{conj}
Let us recall what we mean by {\it LSV tenfolds}. Let $X \subset \P^5_{\C}$ be a smooth cubic  fourfold and let $U \subset B$,  with $B = (\P^5)^*$, the open subset parametrizing hyperplanes
$H \subset \P^5$  such that the cubic 3-fold  $Y_H =X \cap H$ is smooth. Let
$\pi: \sJ_U \to U$ be the Lagrangian fibration whose fiber over the point  $H \in U$ is the intermediate Jacobian $J(Y_H)$. By [LSV] and [Sa] there exists a projective HK manifold of OG10 type $\bar \sJ$ which compactifies $\sJ_U$
and which is equipped with a Lagrangian  fibration $\bar \sJ \to B$ extending the intermediate Jacobian fibration. A geometrical construction of a  smooth compacticficattion $\sJ(X)$ has been described in [LSV] in the case of a general $X$.   Here  "general" means  outside a countable union of divisors of the projective space of  cubic fourfolds , see [Bro, Cor 4.3] . In [Sa] this is extended to any cubic fourfold $X$, smooth or mildly singular. This construction loses the explicit geometry of special fibers of $\pi: \bar \sJ \to B$,  as described in [LSV]. In particular these fibers may be not irreducible.\par
 The Lagrangian fibered HK manifolds compactifying $\sJ_U \to U$ will be referred as {\it LSV tenfolds.} If $X$ is general  the compactification $\sJ(X)$ constructed in [LSV] will be referred to as a {\it general LSV tenfold.} The HK manifolds constructed in [Sa] are specialization of general LSV tenfolds.\par
In [Bro, Cor.4.9] an example is given of a smooth cubic fourfold $X$ such that there is no flat regular compactification of $\sJ_U \to U$ with irreducible fibers \par
In [DM,Thm.1 ]it is proved  that for  a  very good cubic fourfold $X$  there exists  a  smooth compactification of  $J_{U} \to U $ with  irreducible fibers.  Assuming $X$ is a very good cubic fourfold is equivalent to assuming that $X$ does not contain a plane, a rational normal cubic scroll or have a hyperplane section with a corank 3 singularity.\par
If $X$ is a smooth cubic fourfold two  smooth compactification  $\bar \sJ$ and $\bar \sJ'$ of  of $\sJ_U$  are birational equivalent and hence have isomorphic motives. In particular, for a general $X$ the motive of the  compactification $\sJ(X)$ in   [LSV] is isomorphic to the motive of any  other smooth compactification.\par
In [ACLS,Thm.1.2] it is proved that the LSV manifolds associated to a smooth cubic fourfold $X$ satisfy the  Lefschetz standard conjecture $B(X)$. Therefore their motives are of abelian type in the category $\sM_{hom}(\C)$ of homolological motives.\par
 In this note we show that  the motive of  a general LSV  tenfold $\sJ(X)$ belongs to the pseudo-abelian tensor 
subcategory generated by the motive of $X $, and hence it  is of abelian type  in $\sM_{rat}(\C)$,  if $X$ has a motive of abelian type.\par
The same result is known for a compactification $\bar \sJ ^t$ of  the twisted version $\sJ^t \to U$.This is a non trivial torsor over $\sJ_U$, and the fibers parametrise one-cycles of degree 1 on the hyperplane sections , modulo rational equivalence, see [Sa]. The  HK manifold $\bar \sJ^t$  is birational  to the OG10 manifold $\tilde M$ associated to  the Kuznetsov component  $\sA_X$, see [LPZ] .Therefore their motives are isomorphic 
\begin{equation} \label{twisted} h(\bar \sJ^t) = h( \tilde M) \end{equation}
Here $\tilde M \to M$ is  a crepant resolution   of the moduli space $M$ of $\sigma$-semistable objects in $\sA_X$with Mukai vector $\v = 2\v_0$. $\v_0$ is  a primitive element in the Mukai lattice of $\sA_X$, with $\v_0^2=2$ and   $\sigma$ is a $\v_0$-stability condition ( see [FFZ,1.7]).The Chow motive of $\tilde M$  is a direct summand of the (twisted ) motive of $h(X^5)$ , see [FFZ,Thm.1.8]. Therefore it  is of abelian type  if $X$ has a motive of abelian type and the same result  holds true for $ h(\bar \sJ^t)$ ,by \ref{twisted}.\par 
In Sect.2 we prove that the motive of the OG10  HK variety constructed in [LSV] is a direct summand of the (twisted) motive of $X^5$ and hence is of abelian type if the motive of $X$ is of abelian type.\par
In Sect.3 we show that for  smooth cubic fourfolds in a countable union of Hasset divisors $\sC_d$  the motives of  a compactification $\bar \sJ$ and of a twisted compactification $\bar\sJ^t$ coincide and belong to the subcategory generated by the motive of a K3 surface $S$.\par
In Sect.4 we consider the birational transformation $\tilde \sigma : \bar \sJ \dashrightarrow  \bar \sJ $ induced by an automorphsim $\sigma$ of $X$ and describe a family of cubics with a non-symplectic automorphism of order 3 such that $\tilde \sigma$ is a regular automorphism and the motive $h(\bar \sJ )$ is of abelian type.

\section{The motive of a general LSV}
In this section we prove the following result.
\begin{thm}\label{LSV} Let  $X$ be general cubic fourfold and let $J(X)$ be the HK manifold of OG10 type constructed in [LSV]. Then $h(\sJ(X)) $ is a direct summand of the motive
$$ \bigoplus_{1\le i \le 5}h(X^5)(l_i)   \ ; \   0 \le l_i \le 20 $$
in $\sM_{rat}(\C)$. Therefore  $\sJ(X) $ has a motive of abelian type if $h(X)$ is of abelian type\end{thm}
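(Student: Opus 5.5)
Our plan is to transport the known result for the twisted compactification to $\sJ(X)$, using a comparison of the untwisted and twisted Lagrangian fibrations at the level of Chow motives. By [FFZ, Thm.~1.8], $h(\tilde M)$ is a direct summand of $\bigoplus_i h(X^5)(l_i)$ with $0\le l_i\le 20$, and by \eqref{twisted} the same holds for $h(\bar\sJ^t)$. It therefore suffices to show that $h(\sJ(X))$ is a direct summand of (or isomorphic to) a motive built from $h(\bar\sJ^t)$, or directly of such a sum of twisted copies of $h(X^5)$. The bound $l_i\le 20=2\dim\sJ(X)$ comes from the sizes of the correspondences involved, so our construction must only use correspondences of codimension at most $20$ among varieties of dimension $10$ and $20$.

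The key step is to relate $\sJ(X)$ and $\bar\sJ^t$ over the open set $U$. Over $U$, $\sJ^t\to U$ is a torsor under $\sJ_U$. There is a multiplication-by-$n$ type map between torsors: for a degree-$d$ one-cycle class $\Sigma$ on $X$ (for instance $d=3$ using the cycle $h^3$, restricting to a degree-$3$ cycle on each $Y_H$), the map $\sJ^t_U\to \sJ_U$, $z\mapsto 3z-h^3|_{Y_H}$, is a finite étale surjection of degree $3^{10}$. Conversely, we would use the relative Abel--Jacobi / addition maps to get a generically finite correspondence $\sJ_U\dashrightarrow\sJ^t_U$. A generically finite dominant rational map between smooth projective varieties makes the motive of the target a direct summand of that of the source: taking closures of graphs $\Gamma\subset \bar\sJ^t\times\sJ(X)$ gives $\Gamma_*\circ{}^t\Gamma_* = \deg(\Gamma)\cdot\mathrm{id}$ on $h(\sJ(X))$, at least modulo correspondences supported over $B\setminus U$. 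Hence $h(\sJ(X))$ is a direct summand of $h(\bar\sJ^t)$, which combined with [FFZ] gives the statement.

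The main obstacle is precisely controlling the boundary: closures of graphs only give the identity up to correspondences supported on $\pi^{-1}(B\setminus U)$, and these can a priori contribute. Our first attempt is to use that both varieties are HK of OG10 type with Lagrangian fibrations, and that a rational map regular over $U$ is a morphism in codimension one. Alternatively, we would invoke the Riess/Rieß-type argument (as used for birational HK manifolds) showing that $\Gamma$ induces an isomorphism of Chow groups after adding boundary corrections. If this fails, the fallback is the decomposition-theorem approach of [deC-M]/[FFZ]. The general LSV tenfold has irreducible fibers over a stratified base. Its motive then decomposes relatively as in [FFZ] into pieces indexed by strata of $B$, each of which is a summand of motives of relative products of the family of hyperplane sections. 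These in turn are summands of twisted $h(X^k)$ for $k\le5$ via the cylinder correspondence $X\leftarrow \mathcal Y\to B$, with $\mathcal Y$ the universal hyperplane section. The final assertion follows because $\sM^{Ab}_{rat}(\C)$ is closed under tensor products, Tate twists and direct summands.
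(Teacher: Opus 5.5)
There is a genuine gap, and it sits exactly at the step you flag; none of your proposed repairs closes it.

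\textbf{The rational map.} The claim that a generically finite dominant \emph{rational} map $f\colon \bar\sJ^t\dashrightarrow \sJ(X)$ makes $h(\sJ(X))$ a direct summand of $h(\bar\sJ^t)$ holds only for morphisms. Resolve the indeterminacy by blow-ups along smooth centres $W_j$, which lie over the discriminant $B\setminus U$. This shows only that $h(\sJ(X))$ is a summand of $h(\bar\sJ^t)\oplus\bigoplus_j h(W_j)(k_j)$. Nothing you have places the $h(W_j)$ in the subcategory generated by $h(X)$.

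\textbf{The indeterminacy is unavoidable.} Suppose $f$ were a morphism. Then $\det(df)$ would be a nonzero section of $\omega_{\bar\sJ^t}\otimes f^*\omega_{\sJ(X)}^{-1}\cong\sO$, hence nowhere zero. So $f$ would be a finite \'etale cover of the simply connected $\sJ(X)$, i.e.\ of degree $1$, not $3^{10}$.

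\textbf{The repairs.} Being regular in codimension one does not remove the correction terms, since the indeterminacy has codimension $\ge 2$ anyway. The Rie{\ss}-type argument needs a \emph{birational} map between $K$-trivial varieties, via Huybrechts' degeneration of graphs of isomorphisms. A map of degree $3^{10}$ is not birational.

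\textbf{The fallback.} It restates the goal rather than proving it. $\pi\colon\sJ(X)\to B$ is a Lagrangian fibration, not a semismall map, so the motivic decomposition of [deC-M] does not apply to it. [FFZ] relies on the modular stratification of $\tilde M\to M$, which is not available for $\sJ(X)$. The fibres of $\pi$ over the singular strata of $B$ are exactly where you have no control. (Minor: the degree-$3$ one-cycle class on $Y_H$ is $h^2|_{Y_H}$, not $h^3|_{Y_H}$.)

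\textbf{What the paper does instead.} It never compares $\sJ(X)$ with $\bar\sJ^t$. Its key input is Voisin's six-dimensional subvariety $Z\subset\sJ(X)$: the image under $\Psi$ of the $\P^3$-bundle over the uniruled divisor $D\subset F(X)$, whose fibres over $B$ are the curves $\Psi(D_H)$. The relative sum map $\mu\colon Z\times_B\cdots\times_B Z\to\sJ(X)$ (five factors) is a generically finite surjective \emph{morphism} over all of $B$. This puts $h_B(\sJ(X))$ inside $h_B(Z\times_B\cdots\times_B Z)$ with no correction on the $\sJ(X)$ side.

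The paper then splits $h_B(Z)$ off $h_B(\sY)$. It uses the incidence correspondences between $D_H$ and $Y_H$ (the points where $l$ meets the finitely many lines $l_i\in D_H$ meeting it, and the lines of $D_H$ through a point). It also uses the relation $2[l]+[l_i]=h^2$ and the absence of triple lines on $D_H$ for general $H$. The passage from an open subset of $B$ to all of $B$ is made only for these correspondences between the curve family and $\sY$.

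Finally, $\sY\to X$ is a $\P^4$-bundle. That, not the size of the correspondences, is where the twists $0\le l_i\le 20=5\cdot 4$ come from.

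To rescue your route you would need an analogous morphism onto $\sJ(X)$, or independent control of the motives of the loci over $B\setminus U$.
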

First we recall  some result  from [Vois].\par
Let $X \subset \P^5_{\C}$ be a  general h cubic  fourfold and let $U \subset B$,  with $B = (\P^5)^*$, the open subset parametrizing hyperplanes
$H \subset \P^5$  such that the cubic 3-fold  $Y_H =X \cap H$ is smooth. Let
$\pi: \sJ_U \to U$ be the Lagrangian fibration whose fiber over the point  $H \in U$ is the intermediate Jacobian $J(Y_H)$. Let $D \subset F(X)=F $ be the  ample uniruled divisor coming from Voisin's rational self-map
$\phi : F \dashrightarrow F$ as in [SV,Sect.18]. For every line $l \subset X$ there is $\P^3 \subset (\P^5)^*$ bundle of hyperplane sections  $Y_H =X \cap H$,  with $H \in (\P^5)^*$, containing $l$. Therefore we get a $\P^3$-bundle  $f : \sF \to F(X)$. Let $\sF_D$ be the inverse image $f^{-1}(D)$ in $\sF$. Let $\sJ(X) \to B$ be the   compactification of $\pi : J_U \to U$ constructed in [LSV] and let $Z \subset J(X) $ 
be the image of $\sF_D$ under the map
\begin{equation} \label{universal} \Psi : \sF \to J(X).\end{equation}
The map $\Psi$ sends  a line $l \subset Y_H$ , with $H \in U \subset (\P^5)^*$, to the  image, under the map
\begin{equation}\label{Jacobian}   \phi_H : CH_1(Y_H)_{hom} \to  J(Y_H)  \end{equation}
of the class $3[l] -h^2 $,where $h$ is the class of the hyperplane section. The  subvariety   $Z$ of $\sJ(X)$  has codimension 4 (dimension 6) and dominates $ B=(\P^5)^*$, see [Vois, Cor .3.5]. In the fibration 
 $$  g: Z \to B$$ 
 \noindent the fibers are  curves $\Psi(D_H)$, with $D_H = D \cap F(Y_H)$   and  $F(Y_H)$ the surface of lines.The map
 $$ \mu_{Z,U} :Z\times_U \cdots \times_U Z \to \sJ_U$$
\noindent sending $(a_1,\cdots , a_5) $   to   $ \sum_{1 \le i \le 5} a_i$ is dominant, finite  and extends to a morphism  $ Z\times_B \cdots \times_B Z \to B$, see [Ped ,Prop 3.2.].
 Therefore  the maps $\mu_{Z,U}$ induce  a generically finite  surjective morphism $\mu$   
\begin{equation} \label{morphism} \mu : Z\times_B\cdots \times_B Z\to  \sJ(X) \end{equation}
\noindent  which fits in the diagram  
   \begin{equation} \label{diagram} \CD Z\times_B \cdots \times_B Z @>{\mu_5}>> \sJ(X)\\
 @VVV      @VVV   \\
 B@>{=}>>B  \endCD \end{equation}
 \noindent see [Vois, Cor.3.4]
 \begin{lm}\label{int} Let $Y_H$ be a  smooth hyperplane section of the fourfold $X$, $F(Y_H)$  its surface of lines  and  let $D_H = F(Y) \cap D$, where $D$ is the uniruled divisor in $F(X)$. For every line $l \in D_H$ there is a finite number of lines $l_1,\cdots ,l_N$  in $D_H$ such that $l_i \cap l \ne \emptyset$.\end{lm}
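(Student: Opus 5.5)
Fix a smooth hyperplane section $Y_H$ and a line $l\in D_H$. The plan is to show that the lines of $F(Y_H)$ meeting $l$ form a curve $C_l\subset F(Y_H)$, and that $D_H$ is a curve not containing $C_l$ as a component. Then $C_l\cap D_H$ is a proper intersection of two curves on the surface $F(Y_H)$, hence finite, and we take $l_1,\dots,l_N$ to be its points.

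\emph{Step 1.} Recall the classical geometry of the Fano surface of the cubic threefold $Y_H$. For a line $l\subset Y_H$, the incidence locus
\[
C_l=\{l'\in F(Y_H) : l'\cap l\neq\emptyset\}
\]
is a curve. It is obtained by projecting from $l$: the residual conic bundle structure has discriminant a plane quintic, and $C_l$ is its double cover, possibly together with the point $[l]$ in the case of a line of second type. So $\dim C_l=1$ for every $l$. Its class in $\CH^1(F(Y_H))$ is the incidence divisor $I_l$, which is ample up to a multiple of the Plücker class, since $I_l\cdot I_l=5$ by Clemens--Griffiths. Hence $I_l$ is not numerically trivial.

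\emph{Step 2.} Show that $D_H=D\cap F(Y_H)$ is a curve. The divisor $D$ is ample on $F(X)$ and $F(Y_H)\subset F(X)$ is a surface, so $D_H$ is nonempty of dimension at least one. It has dimension exactly one as long as $F(Y_H)\not\subset D$. This holds because $\dim F(Y_H)=2$ while $D$ is a uniruled divisor whose lines fill only a threefold of $F(X)$. Alternatively one can use that $g: Z\to B$ has one-dimensional fibres $\Psi(D_H)$ by [Vois, Cor.3.5]. I will use the latter, which gives $\dim D_H=1$ for $H\in U$.

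\emph{Step 3 (main obstacle).} Show that no irreducible component of $D_H$ is contained in $C_l$. The lemma is only needed for general $H$, so I would argue by dimension count in the family. Suppose that for general $H$ a component $\Gamma$ of $D_H$ lies in $C_{l}$ for some $l\in D_H$. Then all lines of $\Gamma$ meet $l$, so they sweep out a surface in $Y_H$ through $l$. Varying $H$ in the $\P^3$ of hyperplanes containing $l$, the whole 3-dimensional divisor $D$ would then consist of lines meeting a fixed line, family by family. Recall from [SV, Sect.18] that $D$ is the locus of lines $l'$ for which $\phi$ is undefined, i.e.\ lines of second type, with $\phi(l')$ the residual line of the plane tangent to $X$ along $l'$. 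I would use this description to show that the lines of $D_H$ meeting a given line form a finite set: meeting a fixed $l$ imposes one condition on the one-parameter family $D_H$. The degenerate case is exactly where $D_H$ would have to be a component of $C_l$, and this is excluded because $C_l$ is irreducible for general $(H,l)$, being the étale double cover of a smooth plane quintic, and differs from $D_H$ (for instance by comparing classes, since $D_H$ is a multiple of the Plücker class restricted from $F(X)$ and has different degree). This comparison of the two curves is the delicate point. If the direct argument fails, I would fall back on intersection numbers: $D_H\cdot I_l>0$ is finite, and equality of components would force $[D_H]$ to be proportional to $I_l$, which contradicts the degree computation.
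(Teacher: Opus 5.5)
There is a genuine gap in Step 3, and Step 3 is the actual content of the lemma. You never show that, for every $l\in D_H$, the curves $C_l$ and $D_H$ share no component, and each argument you offer for this fails.

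First, your description of $D$ is wrong. The indeterminacy locus of Voisin's map $\phi$ is the \emph{surface} $S$ of lines of second type. By contrast, $D$ is the three-dimensional image of the exceptional divisor $E_S$ of the blow-up of $F$ along $S$; it is swept out by the residual lines $l'$ with $P\cap X=2l''+l'$, $l''\in S$. So the heuristic that meeting $l$ ``imposes one condition'' has no basis.

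Second, no comparison of classes can settle the question. Since $D=ng$ and $g|_{F(Y_H)}=g_H\equiv 3C_l$, one gets $[D_H]\equiv 3n[C_l]$: $D_H$ \emph{is} numerically proportional to $C_l$, so the contradiction in your fallback does not exist. Moreover, different degrees only show $D_H\ne C_l$, not that $C_l$ fails to be one of the components of $D_H$. Likewise, positivity of $D_H\cdot C_l$ says nothing about finiteness of $C_l\cap D_H$.

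Third, irreducibility of $C_l$ holds for general $l\in F(Y_H)$, but your $l$ is constrained to lie on the curve $D_H$, so that generality is not available. You also restrict to general $H$, whereas the statement is for every smooth $Y_H$.

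For comparison, the paper's proof is purely numerical. From $3C_l=g_H$, $C_l^2=5$ and $D=ng$ with $n=60$, it computes $D_H\cdot C_l=3n\,C_l^2=15n$ and sets $N=15n$. This is the value used afterwards in $2N[l]+\sum_i[l_i]=0$; your proposal never produces this $N$. The paper's computation likewise takes for granted that the intersection is proper. So you have correctly located where a real geometric input is needed, for instance a dimension count on the incidence variety of triples $(H,l,l')$ over $B$ with $l,l'\in D_H$ meeting. But you do not supply that input, and as written the finiteness is unproved.
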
 
 \begin{proof} For every line $l\subset Y_H$ we have 

$$3 C_l  =g_H \in CH_1(F(Y_H)) \ ; \   (C_l)^2=5,$$  

\noindent see [Huy,1.15 and 1.17].  Here $C_l$ is the curve of lines on $Y_H$ meeting $l$ and  $g_H$ is the Pl\"ucker polarization on $Y_H$. Since the Pl\"ucker polarization $g$ on $F(X)$ restricts to the Pl\"ucker polarization 
$g\vert_{F(Y_H)}=g_H$ on $Y_H$  we get 

$$ 3n C_l =(n g)\vert_{F(Y_H)} = D_H   \ ;  \   D_H \cdot C_l = (3n) C^2_l=15 n=N $$
\noindent because $D = n g \in CH^1(F)$ (with $n=60$ ) and  $(n g)\vert_{F(Y_H)} =D_H $. \end{proof}
From  \ref{diagram} we get a map of motives $h_B( Z\times_B \cdots \times_B Z ) \to h_B(\sJ(X)))$ in the category $\sM_{rat}(B)$ of Chow motives over $B$. Objects in  $\sM_{rat}(B)$ are triples $(X,p(X/B),m)$ with $X$ over the base $B$ and $p(X/B)$ a correspondence of degree 0 in $\Corr_B(X,X)$,  see [MNP,8.1.3].  Let $\Delta_B : X \to X\times_BX$ be the diagonal embedding. Then 
\begin{equation} \label{diagonal}  h_B(X) =(X,\Delta_B,0)\end{equation}
Since the morphism  $\mu$  in  \ref{morphism}  is generically finite the motive $h_B(\sJ(X)$ is a direct summand of $h_B( Z\times_B \cdots \times_B Z $ in $\sM_{rat}(B)$,see [MNP, 2.3 (vi)].\par
\begin{lm}\label {fibrations} Let $Z \to B$ and let $\sY\to B$, with $\sY $ the universal family of hyperplane sections of the cubic fourfold $X$. Then 
$ h_B(Z\times_B\ Z\cdots \times_B Z)$ is a direct summand of $h_B(\sY\times_B \cdots \times_B \sY)$ in the category $\sM_{rat}(B)$ of Chow motives over $B$ 
  \end{lm}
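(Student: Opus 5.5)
The strategy is to compare $Z$ and $\sY$ over $B$ through an intermediate incidence variety, and then take fibre products factor by factor. Let $\sF_D=f^{-1}(D)\to B$. Its fibre over $H$ is the curve $D_H\subset F(Y_H)$. The map $\Psi\vert_{\sF_D}:\sF_D\to Z$ is a surjective $B$-morphism between varieties of the same dimension ($\dim \sF_D=3+3=6=\dim Z$), so it is generically finite. By [MNP, 2.3 (vi)], applied in $\sM_{rat}(B)$ exactly as for $\mu$ above, $h_B(Z)$ is a direct summand of $h_B(\sF_D)$. Fibre products over $B$ of relative Chow motives satisfy $h_B(A\times_B A')=h_B(A)\otimes h_B(A')$. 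Since direct summands are preserved under tensor products, it then suffices to show that $h_B(\sF_D)$ is a direct summand of a sum of Tate twists of $h_B(\sY)$. Taking the fifth tensor power then gives the lemma, up to twists; these twists are harmless, and they are the source of the $l_i$ in Theorem \ref{LSV}.

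To relate $\sF_D$ to $\sY$, consider the relative incidence correspondence $P_D=\{(l,y,H): l\in D_H,\ y\in l\}\subset \sF_D\times_B\sY$. It is a $\P^1$-bundle over $\sF_D$, so by the projective bundle formula over $B$,
$$h_B(P_D)=h_B(\sF_D)\oplus h_B(\sF_D)(1).$$
Let $p:P_D\to\sF_D$ and $q:P_D\to\sY$ be the projections. The relative cylinder map ${}^t\Gamma_q\circ\Gamma_p$ gives a morphism $h_B(\sF_D)\to h_B(\sY)(1)$, and its transpose gives a morphism in the opposite direction. I will use Lemma \ref{int} to show that a suitable composite is an isomorphism. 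The composite $\sF_D\to\sY\to\sF_D$ is, fibrewise, the correspondence on $D_H\times D_H$ given by $\{(l,l'): l\cap l'\neq\emptyset\}$. Lemma \ref{int} says this correspondence is finite over $D_H$, of degree $N=15n$. Combined with the diagonal component, computed via the excess intersection formula as in Voisin's $I=\alpha\,\Delta+\beta\, g^2+\dots$ relation, this should give an identity of the form $I_D\circ I_D = a\,\Delta_{\sF_D/B}+(\text{terms through }h_B(\sY))$ with $a\neq 0$. Here the terms through $h_B(\sY)$ are those factoring through the restrictions of $g$, which come from hyperplane classes on $\sY$. Inverting $a$ then exhibits $\Delta_{\sF_D/B}$ as factoring through $h_B(\sY)(1)\oplus\bigoplus h_B(\sY)(k)$, so $h_B(\sF_D)$ is a direct summand of a sum of twists of $h_B(\sY)$.

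The main obstacle is this quadratic relation for the incidence correspondence restricted to the curve $D_H$ and spread out over $B$. On $F(Y_H)$, or on $F(X)$, the relation $I^2=\alpha\Delta+\beta I\cdot g+\gamma g^2$ is available (cf. [SV], [Huy]). Restricting it to $D_H\times D_H$, however, requires controlling the excess contribution along the diagonal, and this is where $D=ng$ and $(C_l)^2=5$ from the proof of Lemma \ref{int} enter. If the direct fibrewise argument fails, I would instead work over $F(X)$: show that $h(F)$, and its $\P^3$-bundle $\sF$, are summands of twisted motives of $X$ and $X\times X$ via $I$ (Shen–Vial). Then $h_B(\sF)$ is a summand of twists of $h_B(\sY\times_B\sY)$, and $h_B(\sF_D)$ is a summand of $h_B(\sF)$ up to twist, using the ampleness of $D$ and a relative Lefschetz argument. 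This weaker version would change the number of factors but not the abelian-type conclusion.
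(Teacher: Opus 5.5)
There is a genuine gap. Apart from one step, your argument is formal (modulo the smoothness of $Z$ and $\sF_D$ that [MNP, 2.3 (vi)] requires, an issue the paper shares). That step is only asserted: the identity $I_D\circ I_D=a\,\Delta_{\sF_D/B}+(\text{terms through }h_B(\sY))$ with $a\neq 0$.

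It cannot come from restricting the quadratic relation for $I$ on $F(Y_H)$, let alone on $F(X)$. Write $j\colon D_H\hookrightarrow F(Y_H)$. Pull-back along $j\times j$ preserves codimension. Now $\Delta_{F(Y_H)}$ has codimension $2$ in $F(Y_H)\times F(Y_H)$, so $(j\times j)^*\Delta_{F(Y_H)}$ is a zero-cycle on the diagonal (the self-intersection $D_H\cdot D_H$). By contrast, $\Delta_{D_H}$ has codimension $1$ in $D_H\times D_H$. Restriction therefore yields only relations among zero-cycles on $D_H\times D_H$, and no term $a\,\Delta_{D_H}$ can appear. Moreover, $I_D\circ I_D$ is a composition over $D_H$, not the restriction of any class on $F(Y_H)\times F(Y_H)$. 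Lemma \ref{int} gives only the degree $N$ of $I_D$ over $D_H$, i.e.\ its action on $H^0$, whereas the real difficulty lies in $H^1$.

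On $H^1$ the step is in fact obstructed. Let $\tilde D_H$ be the normalization of $D_H$. A correspondence on $D_H$ that factors through a Tate twist of $h(Y_H)$ acts on $H^1(\tilde D_H)$ through $H^3(Y_H)$, hence with rank at most $10$. The terms built from $g$ are decomposable or zero-dimensional on $D_H\times D_H$, so they act by $0$ on $H^1$. An identity of your shape would therefore force $2g(\tilde D_H)\le 10m$, where $m$ is the number of terms passing through $\sY$ (including the left-hand side).

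More generally, your route needs exactly that $h^1(\tilde D_H)$ be a direct summand of a sum of copies of $h^1(J(Y_H))$. Equivalently, $J(\tilde D_H)$ must be isogenous to an abelian subvariety of a power of $J(Y_H)$. For an ample curve of class $3nC_l$ on $F(Y_H)$, of arithmetic genus $1+45n(n+1)/2$, this is a strong property, and the proposal gives no reason for it. The fallback has the same hole: Lefschetz for the ample curve $D_H\subset F(Y_H)$ makes $h^1(F(Y_H))$ a summand of $h^1(\tilde D_H)$, which is the opposite of the inclusion you need.

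Finally, your passage to twists is forced, not cosmetic. Let $\tilde Z_H$ be the normalization of the fibre $\Psi(D_H)$ of $Z\to B$. An untwisted summand would give $H^1(\tilde Z_H)\hookrightarrow H^1(Y_H)=0$, whereas $H^1(J(Y_H))\hookrightarrow H^1(\tilde Z_H)$ because $D_H$ is ample on $F(Y_H)$. The paper's route uses correspondences $\Gamma_H$ (the points $l\cap l_i$) and $T_H$ (the lines of $D_H$ through a point), and infers $T\circ(-\frac{1}{2N})\Gamma=\Delta_B(Z)$ from the relation $2[l]+[l_i]=h^2$ between classes in $\CH_1(Y_H)$. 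By the obstruction above, that argument does not prove the untwisted statement either, so it offers nothing that closes your gap.
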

\begin{proof} The fibration  $Z \to  B$ has  fibers the curves images of  $D_H$, while $\sY \to B$ has  fibers $Y_H=X \cap H$. We will define correspondences

$$\Gamma \in A_*(Z \times_B \sY)   \  , \   T \in A_*(\sY \times_B Z) ,$$
\noindent induced  by  $\{\Gamma_H \}_{H\in B}$ with $\Gamma_H \in A_*( D_H \times Y_H)$ and $T=\{T_H\}_{H\in B}$ with $T_H \in A_*(Y_H \times D_H)$.Let
$\Gamma_H\in A_1(D_H \times Y_H)$ with $\Gamma_H = \{(l,\sum_i[y_i]\} $, where $[y_i]$, for $i = \,\cdots n$, is the class in $A_0(Y_H)$ of the point $y_i =l\cap l_i$ as in \ref{int}.
Let $T_H \in A_3(Y_H \times D_H)$ be the correspondence given by the closure of  the graph of the multivalued  map sending a point  $y \in Y_H$ to the  finite set of lines in $D_H$ trough $y$.
Then
$$T_H \circ \Gamma_H=\{  [l], \sum_{1 \le i\le N }  [l_i] \} .$$
Since
$$\P^2_{<l , l_i>}\cap Y = 2l + l_i \   for  \ i = 1, \cdots, N$$
we get $ 2 [l] +  [l_i] =h^2$, with $h$ the class of a hyperplane section. Therefore 
\begin{equation} \label{hom} 2N[l] +\sum_{1 \le i \le N}[l_i] =0   \   in   \    CH_1(Y_H)_{hom}.\end {equation}
Let $\Psi: \sF \to \sJ(X)$ be the map in \ref{universal}, given by $ \psi_H : CH_1(Y_H)_{hom} \to \sJ(Y_H)$. Since $\psi_H ([l] ) = 3[l] -h^2 $ the kernel  of the restriction of $\psi_H$ to $CH_0(D_H)$
is generated by the the classes of triple lines in $D_H$, ie such  $3[l] =h^2 $.For every $H \in B$ the set of triple  lines $C_H \subset  F(Y_H) $ is finite, see [CG, Lemma 10.15]\par
We claim that for a generic $H$ in $(P^5)^* $ the curve $D_H$ does not contain any triple line , hence the cycle  $2N[l] + \sum_i [l_i]$  is not in $Ker(\psi_{H})$.\par
 Let  $V \subset  F = F(X) $ be the locus of triple lines. Then $V $is a projective irriducible surface of general type. The intersection $ C = V \cap S$,  where $S$  is the surface of second type lines, is an  irreducible curve. Its normalization 
 $\tilde C$ is  smooth and  contained in  $E_S$, see [GK, Prop.4.11]. Here $E_S$  is the exceptional  divisor of the blow-up  $\tilde F $ of $F$  along $S$ . The uniruled divisor $D \subset F$  is normalization isomorphic to $E_S$.  Let  
 $\sC \to B$  be  the universal family of triple lines ,whose fibre over $H \in B $ is the finite set of triple lines $C_H.$ Then  $\sC$    is a divisor in the fibration $\sD \to B$  with fibre  $D_H$ over $H$.  Therefore for a generic $H \in B $
 $$ D_H \cap C_H = \emptyset.$$
 Therefore the equality in \ref{hom} implies  
\begin{equation}  \label{fibres}  2N[l] +\sum_{1 \le i \le N}[l_i] =0  \  in   \   CH_0(D_H)_{hom} \end{equation}
The variety  $Z \subset \sJ(X)$ is the image of $\sF_D$  and is defined over $B=(\P^5)^* $, with fibre  the image, under the map  $\Psi$  of the curve $D_H \subset F(Y_H)$
Therefore the map $\psi_H $ when restricted to the fibers of $Z$ is the composition
$$CH_0(D_H)_{hom} \to CH_1(Y_H)_{hom} \to \sJ(Y_H). $$
From the  equality on \ref{fibres} we get
 $$\Psi_H ( 2N[l] ) =  \Psi_H(\ - \sum_i[ [l_i]).$$ 
\noindent  for a generic $H \in B$. Therefore there is an open subset $V \subset B$ such that, for every $H \in V$
$$T_H \circ (-1/2) \Gamma_H = \Delta_V(Z)  \  in   \    A^*(Z \times_V Z) = CH^*(Z \times_V Z) \otimes \Q$$
Taking the closure over $B$ of the correspondences  $\Gamma_H$ and $T_H$ we get  $\Gamma \in A^* (Z \times_B  \sY)$  and $T \in A^*(\sY \times_B Z)$ such that
\begin{equation} \label{composition} T \circ (-1/2N)\Gamma = \Delta_B(Z) \in A^*(Z \times_B Z),\end{equation}  
The correspondences $(-(1/2N)\Gamma,$ and $T $  induce morphisms in $\sM_{rat}(B) $
  $$(-1/2N)\Gamma \in \Hom_{\sM_{rat}(B)}( h_B(Z), h_B(\sY)) \ ; \   T \in Hom_{\sM_{rat}(B)}( h_B(\sY), h_B(Z)).$$
From \ref{composition} and \ref{diagonal} we get that  the map of relative motives $T: h_B(\sY) \to h_B(Z)$ has  a right inverse and hence $ h_B(Z)$ is a direct summand of $h_B( \sY)$, see [BP,Lemma2.3].\par
 Since the product in the tensor category $\sM_{rat}(B)$ is given by the fibre product over the base $B$ we get
$$h_B(Z\times_B \cdots \times_B Z)=  h_B(Z)\otimes h_B(Z)\cdots \otimes h_B(Z),$$
$$h_B(\sY\times_B\ \sY \cdots \times_B \sY)= h_B(\sY)\otimes h_B(\sY) \cdots \otimes h_B(\sY).$$
\noindent Therefore the motive $h_B(Z\times_B \cdots \times_B Z $is a direct summand of  $h_B(\sY\times_B\ \sY \cdots \times_B \sY)$ in $\sM_{rat}(B)$. \end{proof} 
 \begin{cor}The motive $h(\sJ(X)) $ is a direct summand of the motive
$$ \bigoplus_{1\le i \le 5}h(X^5)(l_i)   \ ; \   0 \le l_i \le 20 $$
\noindent in $\sM_{rat}(\C)$.
\end{cor}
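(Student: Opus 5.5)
We pass from relative motives over $B$ to absolute motives over $\C$, and then use the geometry of the universal hyperplane section $\sY \to B$.

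\textbf{Step 1: reduce to $h(\sY^{5}_B)$.} By the discussion after \eqref{diagonal}, $h_B(\sJ(X))$ is a direct summand of $h_B(Z\times_B\cdots\times_B Z)$ in $\sM_{rat}(B)$, because $\mu$ in \eqref{morphism} is generically finite and surjective. By Lemma \ref{fibrations}, $h_B(Z\times_B\cdots\times_B Z)$ is in turn a direct summand of $h_B(\sY\times_B\cdots\times_B\sY)$. Composing the two splittings, $h_B(\sJ(X))$ is a direct summand of $h_B(\sY^{5}_B)$, where $\sY^5_B$ denotes the fivefold fibre product. We then apply the realization functor $\sM_{rat}(B)\to\sM_{rat}(\C)$, which sends $(M,p,m)$ to $(M,\iota_*p,m)$, with $\iota: M\times_B M\to M\times M$ the inclusion ([MNP, Ch.8]). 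This functor is additive and respects composition of correspondences, so it preserves idempotents and direct summands. Since $\sJ(X)$ and $\sY^5_B$ are smooth and projective, this gives
$$h(\sJ(X))\ \text{is a direct summand of}\ h(\sY\times_B\cdots\times_B\sY).$$
One caveat must be checked here: $\sY^5_B$ needs to be smooth, or else replaced by a resolution, for the absolute motive to be defined. I expect smoothness to hold, because $\sY^5_B$ is the locus in $X^5\times B$ of tuples lying on a common hyperplane.

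\textbf{Step 2: compute $h(\sY^5_B)$.} The projection $\sY^5_B \to X^5$ has fibre over $(x_1,\dots,x_5)$ equal to the linear space of hyperplanes containing $x_1,\dots,x_5$. On the open set where the span of the points has dimension $k$, this fibre is $\P^{4-k}$. We stratify $X^5$ by the rank of the span, $0\le k\le 4$, which gives five strata. Over each stratum the map is a projective bundle, so the projective bundle formula and the blow-up/stratification (cellular) argument for motives, in the style of [deC-M] or [FFZ], express $h(\sY^5_B)$ as a direct summand of a sum of Tate twists of motives of the stratum closures. Those closures are in turn dominated by varieties built from $X^5$ and Grassmannian bundles. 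Collecting terms yields a summand of $\bigoplus_{i} h(X^5)(l_i)$, with the twists bounded by the relative dimensions involved, so that $0\le l_i\le 20$.

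\textbf{Main obstacle.} The hard point is the degenerate strata, where the points are not in general position. There the fibration is not a projective bundle over $X^5$, and the stratum closures are singular. I would first try to replace $\sY^5_B$ by the smooth variety $\sY^5_B$ itself, viewed over $B$ rather than over $X^5$. Its fibres over $B$ are $Y_H^5$, which leads to the alternative route $\sY\subset X\times B$, where $\sY\to X$ is a $\P^4$-bundle. Then $h(\sY)=\bigoplus_{j=0}^4 h(X)(j)$, and the fibre product can be written via the Künneth decomposition of $h(B)$ and relative Künneth over $B$. If that fails, I would fall back on using Manin's identity principle and a resolution of $\sY^5_B$ together with the fact that, for strata with $k<4$, motives of the stratum closures are summands of $h(X^{j})(m)$ with $j<5$. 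Those are in turn summands of $h(X^5)(m')$, since $h(X)$ contains the unit motive $\un$.

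Finally, if $h(X)$ is of abelian type, then so is every $h(X^5)(l)$, and direct summands stay inside the pseudo-abelian subcategory. The statement follows.
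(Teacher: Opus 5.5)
\textbf{There is a genuine gap, in your Step 2.}

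Your Step 1 is the same as the paper's first step, and you correctly record its output. Pushing forward along $B\to\Spec\C$ shows that $h(\sJ(X))$ is a summand of $h(\sY\times_B\cdots\times_B\sY)$. The paper instead asserts at this point that $\sM_{rat}(B)\to\sM_{rat}(\C)$ sends tensor products to tensor products. It then replaces the fibre product by the absolute product $\sY^5$, a $(\P^4)^5$-bundle over $X^5$, and finishes with $h(\sY)\simeq\bigoplus_{i=0}^4 h(X)(i)$ and K\"unneth. You are right not to do this. Pushforward to a point is not monoidal: already $h_B(B)\otimes h_B(B)=h_B(B)$ goes to $h(B)$, not to $h(B\times B)$. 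Moreover $\sY\times_B\cdots\times_B\sY$ is a different $20$-dimensional variety, birational to $X^5$ because five general points of $X$ span a unique hyperplane. So the whole content of the corollary sits in your Step 2, and Step 2 is not a proof.

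Concretely, two things fail.

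(i) Your smoothness expectation is false. Inside the smooth $X^5\times B$, the fibre product is cut out by $H(x_1)=\cdots=H(x_5)=0$. Take a point with $x_1=x_2=x$ and $H=T_xX$, the tangent hyperplane. There the $x_i$-derivatives of the first two equations vanish, and both $H$-derivatives are evaluation at $x$. So the Jacobian drops rank and the variety is singular. The absolute motive in your displayed conclusion is therefore not defined. You would have to pass to a resolution $W$ and redo Lemma \ref{fibrations} for it.

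(ii) There is no stratification or cut-and-paste formula in $\sM_{rat}(\C)$. Strata that are projective bundles give localization sequences of Chow groups, not direct-sum decompositions of motives. What the birational morphism $W\to X^5$ does give is that $h(W)$ is a direct summand of $h(X^5)\oplus\bigoplus_j h(\tilde E_j)(c_j)$, where the $\tilde E_j$ resolve the components of the exceptional locus. This uses surjectivity on Chow groups after base change plus the usual splitting argument.
\begin{itemize}
\item The $E_j$ lie over the loci of $5$-tuples spanning at most a $\P^3$, a $\P^2$, \dots. Their natural models are fibre products over Grassmannians $G(k+1,6)$ of the universal linear sections $X\cap\Lambda$. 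That is the same kind of object you started with.
\item Your claim that these pieces are summands of $h(X^j)(m)$ with $j<5$ is unfounded. The locus of five points in a common $\P^3$ is $18$-dimensional and involves all five points and the cubic surfaces $X\cap\Pi$.
\item The ``relative K\"unneth'' fallback fails for the same reason the monoidality claim does.
\end{itemize}

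So the step that carries all the content is missing: showing that (a resolution of) $\sY\times_B\cdots\times_B\sY$ has motive in the subcategory generated by $h(X)$, with twists in $[0,20]$. The paper does not supply it either, since its shortcut rests on the monoidality claim above.
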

\begin{proof} By \ref{diagram} the motive $h_B(\sJ(X))$ is a direct summand of $h_B(Z\times_BZ\cdots\times_B Z) $ in $\sM_{rat}(B)$  and the last one is a direct summand of $h_B(\sY\times_B\ \sY \cdots \times_B \sY)$.\par
 The morphism $B \to \Spec \C$ induces a functor $\sM_{rat}(B) \to \sM_{rat}(\C)$ between pseudo-abelian, additive, tensor categories  sending a motive $(X,\Delta_B(X),m)$ to
$(X,\Delta_X,m)$, direct sums to direct sums and tensor products to tensor products, see [MNP, 8.1.7]. Therefore 
$ h(Z^5) $ is  direct summand of $h(\sY^5) $ in $\sM_{rat}(\C)$.\par
 For every point $x \in X$ there is a $(\P^4)^* \subset (\P^5)^*$ of hyperplane sections $Y_H$ containing $x$. Therefore  
$\sY$ is a $\P^4$- bundle over $X$ and there is an isomorphism
$$h(\sY) \simeq \bigoplus_{0 \le  i \le 4} h(X)(i) $$
that implies
$$h(\sY^5) = h(\sY) \otimes \cdots \otimes h(\sY)\simeq \bigoplus_{1\le i \le 5} h(X^5)(l_i),$$
with $0\le l_i\le 20$. The motive $h(Z^5)$, being a direct  summand of $h(\sY^5)$ is  a direct summand of the motive  $M = \bigoplus_{1\le i \le 5} h(X^5)(l_i)$. Since the motive  $h( \sJ(X))$ is a direct summand of $h(Z^5)$ it is also a direct summand of  $M$.
\end{proof}
\begin{rk} (1)Theorem \ref{LSV}  gives a positive answer to a question asked in [ACLS, 9.5], where, in order to prove the Lefschetz Standard Conjecture for the compactification $\sJ(X)$ described in [LSV], the Authors asked 
whether there is an inclusion between the Chow motive of $\sJ(X)$ and the (twisted) motive of a power of $X$.\par
(2) In [ABP, Thm.5.2] it is proved that there are infinitely many Pfaffian cubic fourfold  in  $\sC_{14}$ such that the motive $h(\sJ(X))$  is of abelian type.\end{rk} 

\section{The motives of  $\bar \sJ $ and $\bar \sJ^t$}
 If $X$ is very general then the twisted manifold $\bar \sJ^t$ is not birational to the untwisted one $\bar \sJ$, but this may not be true for a special $X$ in a Hassett divisor $\sC _d$.\par
Let    $X$ be   a cubic fourfold in a Hassett divisor $\sC_d$, where $d$ satisfies the following numerical condition 
$$(**)  \exists f,g \in \Z   \   with \   g\vert 2n^2+2n+2 \  ,  \ n \in \N  \  and   \ d =f^2g $$
 There is a equivalence of categories
\begin{equation} \label{Brauer} \sA_X \simeq D^b(S,\alpha),\end{equation}
\noindent where $\sA_X$ is the Kuznetsov component of the derived category $D^b(X)$, $S$ is  a K3 surface and $\alpha$ a Brauer class in $\Br(S)$, see [Bull].
\begin{prop} Let $X\in \sC_d$,where $d$ satisfies (**) and $d\equiv 2(6)$.Then $h( \bar \sJ) = h(\bar \sJ^t)$ and there is  K3 surface $S$ such that the motives $h(X) ,h(\bar \sJ ) , h(\bar \sJ^t) $ alll belong to the pseudo-abelian tensor subcategory of $\sM_{rat}(\C)$ generated by the motive of $S$.Therefore if $S$ has a  motive of abelian type  the motives $h(X) ,h(\bar \sJ )) , h(\bar \sJ^t)$ are of abelian type.\end{prop}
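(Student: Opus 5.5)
The plan has three steps: show that $\bar \sJ$ and $\bar \sJ^t$ are birational, identify $h(\bar \sJ^t)$ with the motive of a crepant resolution attached to the twisted K3 surface $(S,\alpha)$ of \ref{Brauer}, and then place that motive, together with $h(X)$, in the subcategory generated by $h(S)$.

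\textbf{Step 1: $h(\bar \sJ)=h(\bar \sJ^t)$.} The point is to show that for $X\in\sC_d$ with $d\equiv 2 \pmod 6$ the torsor $\sJ^t_U\to U$ has a rational section, so that $\sJ^t_U\simeq \sJ_U$ over $U$. Since $d\equiv 2\pmod 6$, the lattice $A(X)=H^{2,2}(X,\Z)$ contains an algebraic class $T$ with $T\cdot h^2\equiv 1 \pmod 3$. Restricting (a cycle representing) $T$ to the hyperplane sections $Y_H$ gives $1$-cycles of degree $\equiv 1 \pmod 3$. Subtracting a suitable multiple of $h^2$ yields a family of degree $1$ one-cycles on $Y_H$, that is, a rational section of $\sJ^t_U\to U$. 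Translation by this section identifies $\sJ^t_U$ with $\sJ_U$ over $U$, hence $\bar \sJ^t$ and $\bar \sJ$ are birational. Since birational HK manifolds have isomorphic Chow motives (as recalled in the Introduction), we get $h(\bar \sJ)=h(\bar \sJ^t)$. This is the argument of [Sa]/[LPZ] for $d\equiv 2 \pmod 6$, and I would simply invoke it.

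\textbf{Step 2: reduction to the twisted K3 surface.} By \ref{twisted}, $h(\bar \sJ^t)=h(\tilde M)$, where $\tilde M$ is the crepant resolution of the moduli space $M$ of objects in $\sA_X$ with Mukai vector $2\v_0$. Condition (**) gives the equivalence $\sA_X\simeq D^b(S,\alpha)$. Transporting the stability condition and the Mukai vector across this equivalence, $M$ becomes a moduli space of twisted sheaves (or Bridgeland-semistable twisted objects) on $(S,\alpha)$ with vector $2\v_0$, and $\tilde M$ is its symplectic O'Grady resolution.

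\textbf{Step 3: placing the motives in $\langle h(S)\rangle$.} The [FFZ, Thm.1.8] argument, applied on $S$ instead of $X$, shows that $h(\tilde M)$ is a direct summand of a sum of twisted motives $h(S^k)(l)$. That argument uses only the existence of a universal family over the stable locus and the stratification of the O'Grady resolution, and both are available for twisted sheaves on $S$. Hence $h(\bar \sJ^t)$, and by Step 1 also $h(\bar \sJ)$, lies in the pseudo-abelian tensor subcategory generated by $h(S)$.

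For $h(X)$ itself, I would use the known result that a derived equivalence $\sA_X\simeq D^b(S,\alpha)$ gives $h(X)\simeq \un\oplus h_2^{tr}(S)(1)\oplus \L^{\oplus k}\oplus\cdots$, via the Fourier--Mukai kernel as in [Bull] and [FLV]. Alternatively, one can use that the transcendental part $t(X)$ is isomorphic to $t(S)(1)$. Either way $h(X)\in\langle h(S)\rangle$. The abelian-type conclusion then follows, because $\sM^{Ab}_{rat}(\C)$ is a thick tensor subcategory.

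\textbf{Main obstacle.} I expect Step 3 for $\tilde M$ to be the main difficulty: extending the [FFZ] decomposition from $\sA_X$ to twisted objects on $S$ and checking that the stability condition corresponds. If a direct citation is unavailable, I would argue instead that the [FFZ] proof expresses $h(\tilde M)$ through powers of $h(\sA_X)$, which is a well-defined summand of $h(X)$ by Kuznetsov's construction. Combined with $h(X)\in\langle h(S)\rangle$, this would suffice.
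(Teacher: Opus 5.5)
Your proposal is correct and follows essentially the same route as the paper. The three ingredients match:
\begin{itemize}
\item $h(\bar\sJ)=h(\bar\sJ^t)$ comes from birationality when $d\equiv 2 \pmod 6$. The paper simply cites [BG, Rk.~5.3], whereas you sketch the degree-one section argument.
\item $h(X)\in\Mot(S)$ comes from $t(X)\simeq t_2(S)(1)$ together with the Chow--K\"unneth decomposition.
\item $h(\bar\sJ^t)=h(\tilde M)\in\Mot(S)$ comes via the equivalence $\sA_X\simeq D^b(S,\alpha)$. The step you flagged as the main obstacle is settled in the paper by citing [FFZ, Prop.~4.4] for crepant resolutions of moduli spaces on the twisted K3 surface.
\end{itemize}
Your fallback also works: [FFZ, Thm.~1.8] puts $h(\tilde M)$ inside twisted powers of $h(X)$, and then $h(X)\in\Mot(S)$ finishes. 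State it with $h(X^5)$ rather than a summand ``$h(\sA_X)$'' of $h(X)$. This route avoids having to transport the stability condition to $(S,\alpha)$.
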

\begin{proof} The  equivalence of categories in \ref{Brauer} implies that there is an isomorphism of transcendental motives, see [Bull]
$$ t(X) \simeq t_2(S)(1),$$
\noindent where the motives $h(X)$ and h(S) have the following Chow-K\"unneth decompositions  in  the (covariant) category of Chow motives. $\sM_{rat}(\C)$.
$$h(X) = \un \oplus \L \oplus (\L^2)^{\oplus \rho(X)} \oplus t(X) \oplus \L^3 \oplus \L^4.$$
$$h(S)=\un \oplus \L^{\oplus \rho(S)}\oplus t_2(S) \oplus \L^2$$
Here $\L$ is the Lefschetz motive, $\rho(X) =\rank A^2(X)$, with $A^2(X)= CH^2(X)\otimes \Q$ and $\rho(S)$ is the rank of the Neron-Severi $NS(S))$. Therefore the motive $h(X) $ belongs to the  pseudo-abelian tensor subcategory $\Mot(S)$ generated by the motive of $S$. The  manifolds $\bar \sJ $ and $\bar \sJ^t$ are birational if and only if $X \in \sC_d$ with $d \equiv 2 (6)$, in which case 
$$h( \bar \sJ)= h(\bar \sJ ^t) $$
\noindent see [BG,Rk.5.3]. Let $M$ be  the moduli space of stable objects in $D^b(S,\alpha) $ and let $\tilde M \to M$ be a crepant resolution. Then the Chow motive of $\tilde M$ belongs to the pseudo-abelian tensor subcategory $\Mot(S)$
generated by the motive of $S$, see [FFZ, Prop.4.4].\par
 The manifold $\bar \sJ^t$ is birational to a moduli space of objects of OG10 type  in $\sA_X $ and  hence,  from  \ref{Brauer}, the motive $h(\bar \sJ^t)$ is in $\Mot(S)$. 
\end{proof}
\begin{ex} Let $\sF$ be the family  of cubic fourfolds  $X \subset \P^5_{\C}$ which are invariant under the automorphisms of $\P^5$
$$\sigma : [x_0,x_1,x_2,x_3,x_4,x_5] \to :[x_0,x_1,x_2, \zeta x_3, \zeta x_4,\zeta x_5]$$
\noindent with $\zeta^3 = 1, \zeta \ne 1$.\par
Every $X \in \sF$ has an equation of the form
$$F(x_0,x_1,x_2,x_3,x_4,x_5) =f(x_0,x_1,x_2)  +  g(x_3,x_4,x_5) =0  $$
 \noindent  with   $f$  and    $g$   of  degree   3.\par 
Let $Z\subset \P^3$ and $T\subset \P^3$ be the cubic surfaces defined by $f(x_0,x_1,x_2) -t^3 =0$ and $ g(x_3,x_4,x_5)-t^3 =0$. The plane $t=0$ cuts a smooth cubic curve $C\subset Z$ and a smooth cubic curve $D \subset T$. By [CT,Prop.1.2]  there is a rational map $\P^3 \times \P^3 \to \P^5$ which induces a rational dominant map $\psi : Z \times T \to X$ and whose locus of indeterminacy is  $C\times D$. Let $\tilde X$ be the blow-up of $Z \times T$ at $C\times D$. The fourfold $X$ contains two disjoint planes $P_1$ and $P_2$, see [CT, Rk.2.4], hence it is rational  and belongs to $\sC_{14}$. There is an isomorphism
\begin{equation} \label {iso} h( \bar \sJ)= h(\bar \sJ ^t) \end{equation}
\noindent and $h(\bar \sJ ^t) $ lies in the subcategory of $\sM_{rat}(\C) $ generated by a K3 surface $S$.In this case  the motives of $S$ and of $X$ are both of abelian type. By  Manin's formula there is an isomorphism

$$h(\tilde X)\simeq h(Z\times T) \oplus h (C \times D(1) .$$

\noindent The motive of the fourfold $Z \times T$ is finite dimensional and has no transcendental  part, since both the surfaces $Z$ and $T$ are rational. Therefore the motive $h(\tilde X)$ is finite dimensional and of abelian type  because   its transcendental part coincides with the transcendental motive $t_2(C \times D)(1)$, which lies in the subcategory of $\sM_{rat}$ generated by the motives of curves. The map $\psi$ induces a finite morphism $\tilde \psi: \tilde X \to X$ and hence $h(X)$ is a direct summand of $h(\tilde X)$. It follows that  also $h(X)$ is finite dimensional and  of abelian type. By the results in [Bull] there is an isomophism of motives 
$$t_2(S) (1) \simeq t(X),$$
\noindent hence also $h(S)$ is of abelian type. By \ref{iso} the motives of the compactifications $\bar \sJ$ and $\bar\sJ^t$ are of abelian type.
\end{ex}
\section{Automorphisms of LSV manifolds}
Let $X \subset \P^5$ be a smooth cubic fourfold and let $\bar\sJ$ be a  compactification of $\pi : \sJ_U \to U$, where $U \subset (P^5)^*$ is such that 
$Y_H = X\cap H $ is smooth, for $H \in U$. Any automorphism  $\sigma$  of $X$ induces an automorphism  $\tilde \sigma$ on the open subset  $ \sJ_U \subset \bar\sJ$,.Therefore every automorphism $ \sigma$ of $X$ induces a birational transformation of  $\bar\sJ$.The fibres of $\sJ_U \to U$ are given by the intermediate Jacobians $J(Y_H) $ and the automorphism $\tilde \sigma$  of $\sJ_U $ will either permute fibres  or induce an automorphism of an invariant fibre.The automorphism $\sigma$ of $X$  is symplectic if and only if the induced birational transformation $\tilde \sigma$  is symplectic, see[BG.Lemma 5.7]. \par 
 The involution $x \to -x$ on each fibre of $\sJ_U \to U$  defines  an involution $\tau \in \Bir(\bar\sJ)$ which commutes with all the birational transformations $\tilde \sigma$  induced from  an automorphism $ \sigma$  of $X$ on $\sJ_U$.\par 
The following result has been proved in [MM, Prop.6.4]
\begin{prop} Let $X \subset \P^5$ be a smooth cubic fourfold and $G $ be a finite subgroupo of symplectic automorphisms   of $X$. Then the group of symplectic birational transformations of any compactification $\bar\sJ$  contains a finite subgroup isomorphic to $G$. \end{prop}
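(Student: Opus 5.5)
The plan is to let $G$ act on $\sJ_U$ fibrewise and then argue that the induced birational transformations form a subgroup of $\Bir(\bar\sJ)$ isomorphic to $G$, consisting of symplectic elements.

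\textbf{Step 1: the action on $U$ and on the open part.} Each $\sigma \in G$ is induced by a linear automorphism of $\P^5$ preserving $X$. It therefore acts on $B=(\P^5)^*$ by $H \mapsto \sigma(H)$. Since $\sigma(Y_H)=Y_{\sigma(H)}$ and smoothness is preserved, the open set $U$ is $G$-stable. Pushforward of $1$-cycles,
$$\sigma_* : CH_1(Y_H)_{hom} \to CH_1(Y_{\sigma(H)})_{hom},$$
is compatible with the Abel--Jacobi maps $\phi_H$ of (\ref{Jacobian}). It therefore gives isomorphisms of abelian varieties $J(Y_H)\to J(Y_{\sigma(H)})$. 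These glue to a regular automorphism $\tilde\sigma$ of $\sJ_U$ over the action of $\sigma$ on $U$. One should make the gluing precise by viewing $\sJ_U$ as the relative intermediate Jacobian of the universal family $\sY_U\to U$, on which $G$ acts. Functoriality gives $\widetilde{\sigma\sigma'}=\tilde\sigma\tilde\sigma'$, so we get a homomorphism $G\to \operatorname{Aut}(\sJ_U)\subset \Bir(\bar\sJ)$, because $\sJ_U$ is dense open in $\bar\sJ$.

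\textbf{Step 2: injectivity.} Suppose $\tilde\sigma$ is the identity. Then $\sigma$ acts trivially on $U$, hence on $(\P^5)^*$. So $\sigma$ is a scalar in $PGL_6$ and is the identity on $X$. Thus $G$ embeds in $\Bir(\bar\sJ)$, and its image is a finite subgroup isomorphic to $G$.

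\textbf{Step 3: symplecticity.} By the result quoted above from [BG, Lemma 5.7], $\sigma$ is symplectic on $X$ if and only if $\tilde\sigma$ is symplectic. Since $G$ consists of symplectic automorphisms, each $\tilde\sigma$ is a symplectic birational transformation. A birational map of a hyper-K\"ahler manifold acts on $H^0(\Omega^2)$, because it is an isomorphism in codimension one, so the statement is meaningful.

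I expect Step 1 to be the main obstacle, specifically showing that the fibrewise isomorphisms vary algebraically and form a morphism of $\sJ_U$. Everything downstream is formal once that is in place. I would handle it by using the relative Abel--Jacobi map from the $G$-equivariant family of lines. Concretely, $\Psi:\sF\to\sJ(X)$ of (\ref{universal}) is $G$-equivariant on $\sF$, since $G$ acts on $F(X)$ and hence on the $\P^3$-bundle $\sF$. By [Vois, Cor.3.4], the summation map (\ref{diagram}) built from $\sF$ dominates $\sJ_U$, so $\tilde\sigma$ is determined by, and algebraic because of, the action on $\sF\times_U\cdots\times_U\sF$. A different choice of compactification $\bar\sJ$ does not matter, since all of them contain $\sJ_U$.
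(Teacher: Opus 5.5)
Your argument is correct and is essentially what the paper has in mind. The paper gives no proof of its own and simply cites [MM, Prop.~6.4], but the paragraph just before the statement sketches your Steps 1 and 3: $\sigma$ induces an automorphism $\tilde\sigma$ of $\sJ_U$, hence an element of $\Bir(\bar\sJ)$, which is symplectic by [BG, Lemma~5.7].

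Your Step 2, using that $G\subset PGL_6$ acts faithfully on $(\P^5)^*$, supplies the injectivity the paper leaves implicit. For the algebraicity in Step 1, it is cleaner to identify $\sJ_U$ with the relative Albanese of the relative Fano surface of lines over $U$, which is functorial in $\sigma$. That avoids passing through $\Psi$ and [Vois, Cor.~3.4], which the paper sets up only for general $X$.
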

 Since birational hyper-K\"alher varieties have isomorphic motives, the  birational transformation  $\tilde \sigma$   induces an  automorphism of the Chow motive $h(\bar \sJ)$.\par
A sufficient condition for $\sigma^*$ being a regular automorphism has been proved in [Sa, Prop.3.10]. 
\begin{prop} \label{Sacca'}Let $X$  be a smooth cubic fourfold  and let  $\bar \sJ$ be a  smooth compacticfication of $\pi:  \sJ_U \to U$, such that the fibres of $\pi :\bar \sJ \to  B= (\P^5)^* $ are irreducible. Then any birationall automorphism 
$ \tau : \bar \sJ \dashrightarrow \bar\sJ$ which fixes $L = \pi^*(\sO_{\P^5}(1))$ extends to a regular automorphism.\end{prop}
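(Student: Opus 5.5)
The approach is the standard argument for extending birational maps of Lagrangian fibrations whose fibres are irreducible. Let $\tau:\bar\sJ\dashrightarrow\bar\sJ$ be a birational automorphism with $\tau^*L=L$, where $L=\pi^*\sO_{\P^5}(1)$. Since $\bar\sJ$ is a hyper-K\"ahler manifold, hence has trivial canonical class, $\tau$ is an isomorphism in codimension one. Concretely, there are open subsets $V_1,V_2\subset\bar\sJ$ with complements of codimension $\ge 2$ such that $\tau:V_1\to V_2$ is an isomorphism. In particular $\tau^*$ acts on $\Pic(\bar\sJ)$ and on $H^0(\bar\sJ,L^{\otimes k})$.

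\emph{Step 1: $\tau$ lifts an automorphism of the base.} We have $H^0(\bar\sJ,L)=\pi^*H^0(B,\sO(1))$, and these sections are defined off codimension two, so $\tau^*$ induces a linear automorphism $g$ of $H^0(\bar\sJ,L)$. Since $\pi$ is the morphism given by $|L|$, we get $\pi\circ\tau=\bar g\circ\pi$ on $V_1$, where $\bar g\in PGL_6$ acts on $B=(\P^5)^*$. Thus $\tau$ is a birational map over the automorphism $\bar g$ of $B$.

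\emph{Step 2: extension over codimension one of $B$.} Set $\tau'=\tau$ composed with the pullback structure twisted by $\bar g$; this is a birational map $\bar\sJ\dashrightarrow \bar g^*\bar\sJ$ over $B$. Because every fibre is irreducible, the preimage of any prime divisor of $B$ is an irreducible divisor. So $\tau'$ induces an isomorphism over an open set $B^0\subset B$ whose complement has codimension $\ge 2$ in $B$, namely wherever the indeterminacy loci do not contain whole fibres. More precisely, I would use the fact that $V_1$ meets every fibre over the generic point of each divisor of $B$, together with irreducibility of the fibres, to see that no divisor is contracted or extracted fibrewise.

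\emph{Step 3: extension everywhere.} Here I would take an ample class $A$ on $\bar\sJ$ and consider $A'=\tau^*A$. If $A'$ is $\pi$-ample then $\tau$ is regular, because it identifies the relative Proj of the section rings, and these section rings are computed off codimension two. The main obstacle is showing that $\tau^*A$ is relatively ample. I would use the fact that the relative Picard rank of $\pi$ is one modulo $\pi^*\Pic B$. This holds since the fibres are irreducible and the generic fibre is an abelian variety whose Néron–Severi group, coming from monodromy, has rank $1$. So $\tau^*A\equiv aA+\pi^*(\cdot)$ over $B$. Positivity of $a$ follows by restricting to a general smooth fibre $J(Y_H)$, where $\tau$ is an honest isomorphism onto another fibre. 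Relative ampleness then follows, and Matsusaka–Mumford style reasoning shows that a birational map between varieties that are projective over $B$, preserving relatively ample classes, is an isomorphism. This concludes the proof that $\tau$ extends to a regular automorphism.
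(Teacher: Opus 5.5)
The paper gives no argument of its own here; it only quotes [Sa, Prop.~3.10]. Taken on its own, your Step 1 is fine, as are the positivity $a>0$ (by restricting to a general smooth fibre) and the final deduction ``$\tau^*A$ is $\pi$-ample $\Rightarrow$ $\tau$ is regular'' via relative Proj. The gap is the assertion in Step 3 that $\pi$ has relative Picard number one, on which everything rests.

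Read literally, as $\rank\,\Pic(\bar\sJ)/\pi^*\Pic(B)=1$, the assertion is false whenever $\rho(\bar\sJ)=\rank A(X)+1\ge 3$. This happens for very good cubic fourfolds with $\rank A(X)\ge 2$ (e.g. a general member of $\sC_{14}$), which by [DM] have compactifications with irreducible fibres. Besides vertical divisors, the kernel of $\Pic(\bar\sJ)\to NS(\bar\sJ_\eta)$ contains classes whose restriction to the generic fibre lies in $\Pic^0$: this is the Mordell--Weil part, coming e.g. from algebraic surfaces on $X$. Monodromy on $H^2$ of the fibres cannot see these classes.

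Read numerically, as $\dim N^1(\bar\sJ/B)=1$, you need every curve lying in \emph{any} fibre to be numerically proportional to a curve in a smooth fibre. Irreducibility controls vertical divisors and curves that are limits of curves in nearby fibres. It says nothing about curves inside an irreducible singular fibre over a point of codimension $\ge 2$ in $B$. For such a curve the BBF-dual class $c$ lies in $L^\perp\cap NS(\bar\sJ)_\Q$, and if $c\notin\Q L$ then $q(c)<0$, because $L^\perp/\Q L$ is negative definite. Since $\overline{NE}(\bar\sJ/B)=\overline{NE}(\bar\sJ)\cap L^\perp$, the existence of such curves amounts to a second, negative extremal ray of $\overline{NE}(\bar\sJ/B)$, i.e. a $K$-trivial contraction of $\bar\sJ$ over $B$ other than $\pi$, and hence a possible flop over $B$. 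So your key claim is at least as strong as the proposition itself, and you have in effect assumed the heart of the matter.

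To close the gap you must exclude such contractions $f$. A divisorial one is easy to rule out. Its exceptional divisor is covered by rational curves in fibres, so it cannot dominate $B$. By irreducibility it is then $\pi^{-1}(D)_{\mathrm{red}}$ for a prime divisor $D\subset B$, which is $\pi$-trivial, a contradiction.

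A small contraction needs real input. For instance, use the equality $\dim E=n+\frac12\dim f(E)$ for exceptional components of a symplectic contraction (Kaledin's semismallness together with Wierzba's bound). Combine it with the fact that the symplectic leaf in $f(E)$ maps to $B$ by Poisson-commuting functions, so $\dim\pi(E)\le\frac12\dim f(E)$. This forces $E$ to contain a whole irreducible fibre $F_z$. That is impossible: for $D$ the pull-back of an ample class under $f$, $D^n\cdot[\pi^{-1}(z)]=D^n\cdot[F_t]>0$, while $D|_{F_z}$ is not big.

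Your Step 2 is also incorrect as stated: an indeterminacy locus of codimension $\ge 2$ in $\bar\sJ$ can dominate a divisor of $B$ while meeting each fibre in a proper subset. It is not used later, though.

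Finally, when $\rho(\bar\sJ)=2$ your argument is complete and does not even need irreducible fibres. This is the case used in the Example of Section 4. Then $NS(\bar\sJ)_\Q=\langle L,\Theta\rangle$, so $\tau^*A=aA+bL$ with $a>0$, which is $\pi$-ample.
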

The above result notably applies to the compactification $\sJ(X)$ in [LSV], for $X$ general,  and to the compactification $\bar \sJ$, for a very good cubic fourfold. Since in both cases the fibers of $\pi$ are irreducible  a   birational automorphism $\tilde \sigma$ which fixes $\pi^*( \sO_{\P^5}(1))$  is a regular automorphism  of the hyper-K\"alher  compactification .\par
Giovenzana-Grossi-Onorati-Veniani in [GGOV] proved that any symplectic automorphism $f$ of finite order on a HK variety, $Z$ deformation equivalent to OG10, is trivial. They show that $f$ acts trivially on the second cohomology
group $H^2(Z,\Z) $ and thus is the identity.\par
In particular a symplectic automorphism  $\sigma$ of $X $ does not extend to a regular automorphism $\tilde \sigma$ of a smooth compactification $\bar \sJ$ of $\pi : \sJ_U  \to U$.\par
On the other hand S.Billi and A.Grossi showed that, if $\sigma$ is a non-symplectic automorphism of prime order  on a cubic fourfold $X$, then the induced birational transformation  $\tilde \sigma \in \Bir(\bar \sJ)$ is an automorphism, see [BG, Lemma 5.9]. Their proof is based on the description of the action of $\sigma$ on the lattice $H^2(\bar \sJ,\Z)$ showing that
$$H^2(\bar\sJ, \Z)^{\tilde \sigma}=NS(\bar \sJ.$$
\begin{ex} Here we describe a 10-dimensional family $V$ of cubic fourfolds with a non-symplectic automorphism of order 3 with the following properties\par
(1) For a general  $X\in V$ the compactification $ \bar \sJ$ of the Lagrangian fibration $\sJ_U \to U$ is unique;\par
(2) The   birational automorphism $\tilde \sigma : \bar \sJ  \dashrightarrow  \bar \sJ$ is  a regular automorphism ;\par  
(3) The  Chow motive $h(\bar \sJ ) \in \sM_{rat}$ is of abelian type.\par 
Let $\sigma$ be the order 3 automorphism
$$ \sigma : [x_0,x_1,x_2,x_3,x_4,x_5] \to [x_0,x_1,x_2,x_3,x_4, \zeta x_5],$$
\noindent with $\zeta$ a primitive cubic root of 1. Every $X$ belonging to the family $V$ of cubic fourfolds invariant under $\sigma$ has an equation of the form
$$ F(x_0,x_1,x_2,x_3,x_4) + x^3_5 =0 $$
\noindent with $ F$ homogeneous of  degree 3.  For a general element $X \in V$  the algebraic lattice $A(X) $ has rank 1, see [BG, Table 3].  From the isomorphism  of rational Hodge structures

$$H^2_{tr}(\bar \sJ,\Q) \simeq H^4_{tr}(X,\Q)$$

\noindent we get  $\rho(\bar \sJ ) = \rank H^{2,2}(X,\Q)) +1 =2$, see [Sa, Lemma 3.2].  Any hyper-K\"alher compactification  $\bar \sJ$  of $\sJ_U \to U$ is isomorphic to the compactification  $\sJ(X)$ constructed in [LSV], see [Sa, Cor.3.4]. 
Therefore the fibres of $\bar \sJ \to (\P^5)^*$ are irreducible.The Neron -Severi  lattice $NS(\bar \sJ)$ is generated by $L = \pi^*\sO_{\P^5} (1)$ and the relative  theta divisor $\Theta$, see [Sa ,Rk 3.1]. 
Since $NS( \bar \sJ) = H^2(\bar \sJ,\Q)^{\tilde \sigma}$ the divisor $L$ is fixed by $\tilde \sigma$. From \ref{Sacca'} we get that $\tilde \sigma : \bar \sJ  \dashrightarrow \bar \sJ$  is  a regular automorphism.\par 
Every cubic fourfold $X \in V $ has a motiive of abelian type (see [Lat]) and  $h(\bar \sJ) =h( \sJ(X))$  belongs to the subcategory generated by the motive of $X$, by \ref{LSV} . Therefore $h(\bar \sJ)$ 
is of abelian type, which proves (3).
 \end{ex}

\end{document}